\tikzset{mydeco/.style={pic actions/.append code=\tikzset{postaction=decorate}}}
\newtheorem{theorem}{Theorem}
\def\lc{\left\lceil}   
\def\rc{\right\rceil}
\title{On set systems without singleton intersections}
\author{Danila Cherkashin}
\begin{document}

\maketitle

\begin{abstract}
Consider a family $\mathcal{F}$ of $k$-subsets of an ambient $(k^2-k+1)$-set such that no pair of $k$-subsets in $\mathcal{F}$ intersects in exactly one element.   
In this short note we show that the maximal size of such $\mathcal{F}$ is $\binom{k^2-k-1}{k-2}$ for every $k > 1$. 
\end{abstract}

\textbf{Keywords:} Johnson scheme, Erd{\H o}s--S{\'o}s problem, finite projective planes.

\section{Introduction}

A large branch of combinatorics grows from the celebrated Erd{\H o}s--Ko--Rado theorem~\cite{erdos1961intersection}, which states that
a family of pairwise intersecting $k$-element subsets of an $n$-element set has size at most $\binom{n-1}{k-1}$ provided that $n \geq 2k$. 

For our aim it is convenient to define a \textit{Johnson graph} $J(n,k,t)$, whose vertices are $k$-element subsets of an $n$-element set and edges connect pairs of vertices with intersection $t$. An \textit{independent set} is a vertex subset of a graph such that it does not contain an edge.
Let $\alpha(G)$ stand for the size of a maximal independent set in a graph $G$.

In this language the statement of the Erd{\H o}s--Ko--Rado theorem is 
\[
\alpha (J[n,k,0]) = \binom{n-1}{k-1}
\]
for $n \geq 2k$. The problem of finding $\alpha (J[n,k,t])$ is known as Erd{\H o}s--S{\'o}s forbidden intersection problem.
The bibliography on this problem is wide and the proofs use very different techniques. Let us briefly provide the highlights.
Frankl and F{\"u}redi~\cite{frankl1985forbidding} used so-called $\Delta$-system method to show that 
$\alpha(J[n,k,t]) = \binom{n-t}{k-t}$ for $n > n_0(k)$ and $k \geq 2t+2$.

Another very general result was obtained by Frankl and Wilson~\cite{frankl1981intersection} by a rank bound: it gives 
$\alpha(J[n,k,t]) \leq \binom{n}{k-t-1}$ for $k > 2t$ and $k-t$ being a prime power.
This result has important applications to discrete geometry, see~\cite{kahn1993counterexample,raigorodskii2001borsuk}. 

Recently, Ellis, Keller and Lifshitz~\cite{ellis2024stability,keller2021junta} used junta-method to determine $\alpha(J[n,k,t])$ for 
$\varepsilon < k/n < 1/2 - \varepsilon$ and $n > n_0(t,\varepsilon)$. 
Kupavskii and Zakharov showed that $\alpha(J[n,k,t]) = \binom{n-t-1}{k-t-1}$ for $k > k_0$, $n = \lc k^\alpha \rc$, $t = \lc k^\beta \rc$, where $\alpha > 1$ and $1/2 > \beta > 0$ satisfy $\alpha > 1 + 2\beta$. They use spread approximation technique~\cite{alweiss2020improved}.

Also let us mention that $\alpha(J[n,4,1]) = \binom{n-2}{2}$ for $n \geq 9$, see Keevash--Mubayi--Wilson~\cite{keevash2006set}.

Our contribution is the following

\begin{theorem} \label{th:main}
    For every $k > 1$ one has
    \[
    \alpha(J[k^2-k+1,k,1]) = \binom{k^2-k-1}{k-2}.
    \]
\end{theorem}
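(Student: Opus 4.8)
The plan is to establish the lower bound by an explicit construction and the upper bound by reducing, as far as possible, to the $t$-intersecting Erd{\H o}s--Ko--Rado theorem, isolating the families that contain two disjoint members as a separate case. For the lower bound, fix two points $a,b$ of the ground set and let $\mathcal{F}$ consist of all $k$-subsets containing both $a$ and $b$. Any two such sets share $\{a,b\}$, hence meet in at least two points, so $\mathcal{F}$ is an independent set of $J[k^2-k+1,k,1]$ of size $\binom{k^2-k-1}{k-2}$, which gives $\alpha\ge\binom{k^2-k-1}{k-2}$.

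For the upper bound write $n=k^2-k+1$ and note that for distinct members of an independent set the admissible intersection sizes are exactly $\{0,2,3,\dots,k-1\}$. Suppose first that $\mathcal{F}$ is intersecting. Then every pairwise intersection has size at least $2$, i.e. $\mathcal{F}$ is $2$-intersecting. Since $n-3(k-1)=(k-2)^2\ge 0$, we have $n\ge(t+1)(k-t+1)$ with $t=2$, so the exact $t$-intersecting theorem (Frankl's conjecture, established in this range by Wilson and in full by Ahlswede--Khachatrian) yields $|\mathcal{F}|\le\binom{n-2}{k-2}$, with the $2$-star as the extremal family. This settles the intersecting case.

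The remaining and principal case is that $\mathcal{F}$ contains two disjoint members. Here I would form the auxiliary graph $H$ on the vertex set $\mathcal{F}$ whose edges join pairs with intersection at least $2$. Two members in different connected components of $H$ are non-adjacent, hence disjoint; consequently the point sets spanned by distinct components of $H$ are pairwise disjoint. Thus $\mathcal{F}$ splits as a disjoint union of subfamilies supported on pairwise disjoint point sets $P_1,\dots,P_c$ with $\sum_i|P_i|\le n$, each subfamily being an independent set of a smaller Johnson graph. The aim is then to bound each piece by $\binom{|P_i|-2}{k-2}$ and to combine the estimates through the super-additivity $\binom{a-2}{k-2}+\binom{b-2}{k-2}\le\binom{a+b-2}{k-2}$ (a consequence of Vandermonde's identity), which together with $\sum_i|P_i|\le n$ would give $|\mathcal{F}|\le\binom{n-2}{k-2}$.

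The hard part is that the per-piece bound $\binom{|P_i|-2}{k-2}$ is genuinely false for small $|P_i|$: for instance the $k$-subsets of a $(k+1)$-set form an independent family of size $k+1>\binom{k-1}{k-2}$, so the clean super-additive argument cannot be applied verbatim. I expect to resolve this by induction on the size of the ground set. For $|P_i|\ge 3(k-1)$ an intersecting component is controlled by Ahlswede--Khachatrian while a non-intersecting one is decomposed further; for $k\le|P_i|<3(k-1)$ one bounds the component crudely by $\binom{|P_i|}{k}$ and verifies that such dense-but-small components are too inefficient to beat the $2$-star once $\sum_i|P_i|\le k^2-k+1$. The exact value $n=k^2-k+1$ should enter precisely here, since the complement of any single set has only $(k-1)^2$ points, tightly limiting the room available to a disjoint partner. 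The small cases $k\in\{2,3\}$, where the $2$-star and these sporadic configurations are comparable in size and where finite projective planes of order $k-1$ furnish natural alternative extremal examples, I would check by hand.
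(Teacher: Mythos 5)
Your lower bound and your treatment of the intersecting case are fine: a family with no singleton intersections and no disjoint pair is $2$-intersecting, and since $k^2-k+1-3(k-1)=(k-2)^2\geq 0$, Wilson's theorem (or Ahlswede--Khachatrian) indeed gives $\binom{n-2}{k-2}$ there. The genuine gap is in what you call the principal case. Your decomposition into connected components of the auxiliary graph $H$ is valid, but it leaves untouched exactly the hard configuration: a \emph{single} component of $H$ containing two disjoint members. Such a component is, by definition of a component, connected under intersections of size at least $2$, so it cannot be ``decomposed further'' --- your recursion is vacuous precisely where it is needed. Your case analysis thus covers only (i) intersecting components, via Ahlswede--Khachatrian, and (ii) components of support smaller than $3(k-1)$, via the crude bound $\binom{|P_i|}{k}$; but a connected, non-intersecting component can have support of any size from $2k$ up to $n$, and for such a component of large support you offer no bound at all. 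This is not a technicality: bounding families that contain disjoint pairs yet are linked through pairwise intersections of size at least $2$ is the core difficulty of the Erd\H{o}s--S\'os problem at these parameters. Moreover, the induction on the ground-set size that you invoke would require, as an induction hypothesis, an upper bound on $\alpha(J[m,k,1])$ for every intermediate $m$; no bound of the form $\binom{m-2}{k-2}$ holds for small $m$ (as you yourself observe, ``take everything'' wins there), and the exact value of $\alpha(J[m,k,1])$ for $2k\leq m< k^2-k+1$ is in general unknown, so the induction cannot even be set up in the form you describe.

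For contrast, the paper proves the upper bound with no structural case analysis whatsoever: it computes the spectrum of $J(k^2-k+1,k,1)$ inside the Bose--Mesner algebra of the Johnson scheme, observes that exactly at $n=k^2-k+1$ the eigenvalues $\lambda_1=\lambda_2=-\frac{1}{k-1}\binom{n-k}{k-1}$ coincide and are minimal, and applies the Hoffman ratio bound, which gives $\alpha\leq\binom{n}{k}\cdot\frac{1}{k^2-k+1}=\binom{n-2}{k-2}$. A purely combinatorial proof along your lines is known only when $k-1$ is a prime power, where the lines of a projective plane of order $k-1$ give a clique of size $k^2-k+1$ and the clique--coclique bound finishes the job; if you wish to salvage your approach for general $k$, the connected component containing two disjoint sets is where all of the work lies.
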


Note that for $k > k_0$ Theorem~\ref{th:main} follows from the mentioned result of Kupavskii and Zakharov~\cite{kupavskii2024spread}.

\section{Tools}

\subsection{Johnson scheme and Bose--Mesner algebra}

The facts from this subsection can be found in books~\cite{godsil2016erdos} and~\cite{bannai2021algebraic}.

\textit{An associative scheme} is a pair $(V,\mathcal{R})$ consisting of a finite set $V$ and a family of non-empty binary relations $\mathcal{R}$ on $V$, and satisfying the following properties:
\begin{enumerate}
     \item sets $R \in \mathcal{R}$ form a partition $V^2$;
     \item the diagonal $\Delta(V)$ of the set $V^2$ is an element of $\mathcal{R}$;
     \item the set $\mathcal{R}$ is closed under the interchange of the first and second coordinates in $V^2$;
     \item for arbitrary relations $R, S, T \in \mathcal{R}$ numbers
     \[
     | v \in V : (u,v) \in R, (v,w) \in S|
     \]
     are the same for all $(u,w) \in T$.
\end{enumerate}

The classical \textit{Johnson scheme} is given by 
\[
V = \binom{[n]}{k}; \quad R_i := \{(v,u) \in V\times V : \langle v,u \rangle = k-i\}, \quad  i=0,\dots k.
\]

The Bose--Mesner algebra of the Johnson scheme is the algebra of $\binom{n}{k} \times \binom{n}{k}$ matrices, with entries defined by $A(x,y) := f(|x \cap y|)$. Since this is indeed algebra, these matrices are simultaneously diagonalizable.
A standard basis of the Bose--Mesner algebra is formed by matrices $B_{i}(x,y) := \binom{|x\setminus y|}{i}$, $i = 0,\dots, k$.
The eigenvalues of $B_i$ are given by
\[
\mu_j^{(i)} = (-1)^j \binom{k-j}{i-j} \binom{n-i-j}{k-j}.
\]
Then the machinery offers to represent any matrix $A$ from the Bose--Mesner algebra as $\sum b_iB_i$ and get it spectrum as 
\[
\lambda_j = \sum_{i=0}^k b_i \mu_j^{(i)}.
\]

Let $I$ be a subset of $\binom{[n]}{k}$ and $\chi_I$ stand for its characteristic vector. Denote by $c_i$, $i=0,\dots, k$ the coefficients in the decomposition of $\chi_I$ with respect to the common eigenspaces of the Bose--Mesner algebra.

\subsection{Hoffman bound}

The following celebrated theorem is widely known as Hoffman ratio bound~\cite{haemers2021hoffman}.

\begin{theorem}
    Let $A$ be a pseudoadjacency matrix of a $d$-regular $N$-vertex graph $G$ with non-negative entries. Then 
\begin{equation} \label{eq:Hoffmanita}
\alpha(G) \leq N \frac{-\lambda_{min}}{d - \lambda_{min}},
\end{equation}
where $\alpha(G)$ is the independence number of $G$.
\end{theorem}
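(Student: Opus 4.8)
The plan is to run the standard variational argument for the ratio bound, extracting from the hypothesis ``pseudoadjacency matrix'' exactly three structural facts and feeding them into a single computation with the characteristic vector of an independent set. First I would record the properties I intend to use: $A$ is symmetric (so it has a real spectrum $d = \lambda_1 \ge \lambda_2 \ge \dots \ge \lambda_N = \lambda_{min}$ together with an orthonormal eigenbasis $v_1,\dots,v_N$); $A$ is supported on the edges of $G$ and has vanishing diagonal, i.e.\ $A_{uv} = 0$ whenever $u = v$ or $\{u,v\} \notin E(G)$; and $A$ is $d$-regular in the sense that $A\mathbf{1} = d\mathbf{1}$. Since the entries are non-negative and the row sums all equal $d$, Perron--Frobenius guarantees that the spectral radius of $A$ is $d$, so $v_1 = \mathbf{1}/\sqrt{N}$ really is a top eigenvector and $\lambda_1 = d$ is the \emph{largest} eigenvalue; the zero diagonal forces $\operatorname{tr} A = \sum_i \lambda_i = 0$, whence $\lambda_{min} \le 0$ and the denominator $d - \lambda_{min}$ is positive (the bound being trivial for the empty graph).

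The heart of the argument is the observation that the quadratic form of $A$ annihilates independent sets. Let $S$ be an independent set of $G$ and let $x = \chi_S$ be its characteristic vector. Then $x^{\mathsf T} A x = \sum_{u,v \in S} A_{uv}$, and every term vanishes: the diagonal terms because $A_{uu} = 0$, and the off-diagonal terms because distinct vertices of an independent set are non-adjacent, so $A_{uv} = 0$. I would next expand $x$ in the eigenbasis as $x = \sum_{i=1}^N c_i v_i$. Pairing against $v_1$ gives $c_1 = \langle \chi_S, \mathbf{1}/\sqrt N\rangle = |S|/\sqrt N$, Parseval gives $\|x\|^2 = |S| = \sum_i c_i^2$, and the spectral expansion gives $0 = x^{\mathsf T} A x = \sum_i \lambda_i c_i^2 = d\,c_1^2 + \sum_{i \ge 2}\lambda_i c_i^2$.

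The final step is to replace the unknown eigenvalues $\lambda_2,\dots,\lambda_N$ by their worst case. Since $c_i^2 \ge 0$ and $\lambda_i \ge \lambda_{min}$ for every $i$, we have $\sum_{i\ge 2}\lambda_i c_i^2 \ge \lambda_{min}\sum_{i\ge 2} c_i^2 = \lambda_{min}(|S| - c_1^2)$, so that $0 \ge d\,c_1^2 + \lambda_{min}(|S| - c_1^2)$. Substituting $c_1^2 = |S|^2/N$, dividing by $|S| > 0$, and rearranging isolates $\tfrac{|S|}{N}(d - \lambda_{min}) \le -\lambda_{min}$, which is precisely~\eqref{eq:Hoffmanita}.

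The part that demands the most care — rather than being a genuine obstacle — is the bookkeeping of signs: one must check that $\lambda_{min} \le 0$ so that the factor $(|S|-c_1^2) \ge 0$ combines with $\lambda_{min}$ in the correct direction, and that non-negativity of the entries is exactly what legitimises treating $d$ as the largest eigenvalue through Perron--Frobenius. Without the latter the leading term $d\,c_1^2$ could not be separated from the rest, and the whole estimate would collapse; this is where the non-negativity hypothesis earns its place in the statement.
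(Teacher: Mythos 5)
Your proof is correct and is essentially the paper's own argument: both expand $\chi_S$ in the eigenbasis of $A$, use $(A\chi_S,\chi_S)=0$ for an independent set, bound the non-principal terms by $\lambda_{min}$, and use that the spectral radius $d$ is attained at the all-ones vector. You merely make explicit some steps the paper leaves as ``simple observations'' (Perron--Frobenius for $\lambda_1=d$, and $\operatorname{tr} A=0$ giving $\lambda_{min}\le 0$), which is fine.
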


The proof is a one-line collection of several simple observations. Let $I$ be any independent set in $G$ and $\chi_I$ stand for its characteristic vector. Then
\begin{equation} \label{eq:main}
0 = (A\chi_I,\chi_I) = \sum_{i=1}^N a_i^2 \lambda_i \geq a_1^2 d + (a_2^2 + \dots + a_N^2) \lambda_{min} = \frac{|I|^2}{N} d + \left (|I| - \frac{|I|^2}{N}  \right) \lambda_{min},
\end{equation}
where $a_i$ are the coefficients in the decomposition of $\chi_I$ in the eigenbasis of $A$.
Here we use that a spectral radius of a $d$-regular graph is $d$ and it is achieved at the all-unit vector.
Also in the case of an edge-transitive graph Hoffman bound coincides with Lov{\'a}sz theta-bound~\cite{lovasz1979shannon}.

\section{The proof}

\begin{proof}[Proof of Theorem~\ref{th:main}]
  An example of an independent set of size $\binom{k^2-k-1}{k-2}$ is given by a collection of all sets, containing elements 1 and 2.

Let $A$ be the adjacency matrix of the Johnson graph $J(n,k,1)$.  Clearly, this matrix is given by $f(1) = 1$, $f(0) = f(2) = \dots = f(k) = 0$. It is straightforward to check that the coefficients in the standard basis of the Bose--Mesner algebra are the following
    \[
    b_0 = b_1 = \dots = b_{k-2}=0, \quad b_{k-1} = 1, \quad b_k = -k
    \]
    and
    \[
    \lambda_0 = -k\binom{n-k}{k} + k\binom{n-k+1}{k} = k \binom{n-k}{k-1},
    \]
    \[
    \lambda_1 = k\binom{n-k-1}{k-1} - (k-1)\binom{n-k}{k-1}, \quad \lambda_2 = -k\binom{n-k-2}{k-2} + (k-2)\binom{n-k-1}{k-2}.
    \]
For $n = k^2-k+1$ we have
\[
\lambda_1 = \lambda_2 = -\frac{1}{k-1} \binom{n-k}{k-1} < 0 
\]
and
\[
\lambda_3 = k\binom{n-k-3}{k-3} - (k-3)\binom{n-k-2}{k-3} = \frac{2k^2-3k-3}{k^2-3k+2} \binom{n-k-3}{k-3} > 0.
\]
Also  $|\lambda_4|, |\lambda_5|, \dots, |\lambda_k| \leq k \binom{n-k}{k-3}$, hence $\lambda_1 = \lambda_2$ are the smallest eigenvalues.

Now the upper bound follows from the Hoffman bound~\eqref{eq:Hoffmanita}:
\[
\frac{-\lambda_{min}}{d - \lambda_{min}} = \frac{1}{k^2-k+1} = \frac{\binom{n-2}{k-2}}{\binom{n}{k}}.
\]
    
\end{proof}

\section{Discussion}

Let us briefly discuss the sporadic nature of the result. In all solved cases maximal independent sets form designs or juntas.
Hoffman bound is tight when the corresponding characteristic vector belongs to maximal and minimal eigenspaces and it seems difficult to modify the method in other cases.
The maximal eigenspace is always unique and corresponds to the all-unit vector.
For $t \geq 1$ the characteristic vectors of all known examples belong to more than two eigenspaces, so several minimal eigenvalues should coincide in order to use Hoffman bound. Summing up, it seems that the only case is $t=1$, in which we have an example with the characteristic vector in the first three eigenspaces.
So we need $\lambda_1 = \lambda_2$ which implies $n = k^2 - k + 1$.

\subsection{Finite projective planes}

If $k-1$ is a prime power, then one can prove the upper bound in Theorem~\ref{th:main} combinatorially. Since Johnson graphs are vertex-transitive (moreover they are edge-transitive), one has
    \[
    w(J) \cdot \alpha(J) \leq |V(J)|.
    \]
    In our case $k^2 - k + 1$ is the size of a projective plane over $GF(k-1)$, and so $w(J[k^2-k+1]) \geq k^2-k+1$.
    This immediately implies the bound.

    However for a composite $k-1$ the corresponding construction may not exist.
    A major negative result is a celebrated Bruck--Ryser theorem~\cite{bruck1949nonexistence} which states that if $n$ is a positive integer of the form $4k + 1$ or $4k + 2$ and $n$ is not equal to the sum of two integer squares, then $n$ does not occur as the order of a finite plane.  A widely known conjecture is that the order of a finite plane is always a prime power. Also, the non-existence of a finite plane of order 10 was proven by Lam~\cite{lam1991search}.

\subsection{Uniqueness}

For $k = 3$ the graph $J(7,3,1)$ has a maximal independent set of another structure, namely 
\[
\{\{1,2,3\},\{1,2,4\},\{1,3,4\},\{2,3,4\},\{5,6,7\}\}.
\]
For other values of $k$ it seems very likely that every maximal independent set of $J(k^2-k+1,k,1)$ forms a family with two elements in common.
However we are not able to prove it. Following the proof of Theorem~\ref{th:main}, an independent set $I$ of the maximal size satisfies the equality in~\eqref{eq:main} and thus $\chi_I$ belongs to the zeroth, the first and the second eigenspaces.
Main obstacle in our attempt is a relatively complicated structure of the second eigenspace.

\paragraph{Acknowledgements.} The research is supported by Bulgarian NSF grant KP-06-N72/6-2023.

\bibliography{main}
\bibliographystyle{plain}

\end{document}